 \newtheorem{theorem}{Theorem}[section]
       \newtheorem{lemma}[theorem]{Lemma}
       \newtheorem{proposition}[theorem]{Proposition}
       \newtheorem{corollary}[theorem]{Corollary}
    \newtheorem{definition}[theorem]{Definition}
       \newtheorem{claim}[theorem]{Claim}       
\begin{document}
\title[Tropical Algebraic approach to Consensus over Networks]
{Tropical Algebraic approach to \\ Consensus over Networks}
\author[J. G. Manathara]{Joel George Manathara}
\address{Department of Aerospace Engineering \\ Indian Institute of
  Science \\ Bangalore 560012, India.}
\email{joel@aero.iisc.ernet.in}
\author[A. Dukkipati]{Ambedkar Dukkipati}
\address{Department of Computer Science and Automation \\ Indian Institute of
Science \\ Bangalore 560012, India.}
\email{ambedkar@csa.iisc.ernet.in}
\author[D. Ghose]{Dabasish Ghose}
\address{Department of Aerospace Engineering \\ Indian Institute of
  Science \\ Bangalore 560012, India.}
\email{dghose@aero.iisc.ernet.in}

\subjclass{}
\keywords{Directed Graphs, Adjacency Matrix, Strongly Connected}

\begin{abstract}
In this paper we study the convergence of the max-consensus protocol.
Tropical algebra is used to formulate the problem. Necessary and
sufficient conditions for convergence of the max-consensus protocol
over fixed as well as switching topology networks are given.
\end{abstract}
\maketitle
\section{Introduction}
The consensus problem addresses the question of whether it is
possible to achieve a consensus among agents over a network while
communicating only with the immediate neighbors. The consensus
problem in multi-agent or distributed systems were addressed as
early as in mid eighties \citep{tsitsiklis}. The field has been been
dormant for a while, with sporadic appearances of seemingly
unrelated publications \citep{reynolds, vicsek}, until it saw a burst
in the last few years \citep{ali, olfati2,olfati1,ren}. The reason
for a renewed interest in the field of consensus is due to the
applications it finds in decentralized multi-agent coordination
missions and the recent developments of concrete instances of such
systems like wireless sensor networks and multiple unmanned aerial
vehicles.

The average or the weighted average consensus of agents over
networks, where the values of all nodes or agents in the network
converges to a weighted average of their initial values, is what
received wide attention of researches, and this and various
extensions of it have been addressed extensively \citep{olfati1,ren}.
On the otherhand In a max-consensus, every agent in the network,
communicating with 
only the immediate neighbors, should converge to a value which is
maximum of all the initial values of the agents. Max-consensus has
applications like decentralized leader selection and detection of
faults in large networks. The problem of max-consensus over a
network of fixed topology was addressed in \citep{olfati2}. However,
as the max operation is not linear, its treatment in~\citep{olfati2}
is different from that of linear protocols where convergence to
consensus is proved through convergence of matrix products. We
propose a different analysis of max-consensus protocol by forcing
max operation to be linear through the use of tropical algebra
\citep{maxplus}. By this we can give an analysis of the max-consensus
protocol akin to the standard method of convergence
analysis for linear consensus protocols, that is, analysis of
products of adjacency matrices \citep{ali}. We also give
necessary and sufficient conditions for convergence of max-consensus
protocol under switching topology.

The rest of the paper is organized as follows.
First, we state the max-consensus problem in its standard form.
Then, we introduce the max-plus or the tropical algebra and
thereafter reformulate the max-consensus problem. Under this new
setting, we analyze the convergence of max-consensus protocol over a
fixed network and give necessary and sufficient condition for this
to occur. Then we study max-consensus under switching topologies.
The necessary and sufficient condition for convergence is given for
this case also.

\section{Background and Preliminaries}
\label{Consensus:SubSection:MaxConsensus:ProblemStatement} Let
$\mathcal{G}=(\mathcal{V},\mathcal{E})$ be a directed graph
(digraph) with nodes $\mathcal{V}=\{1,\ldots,N\}$ and edges
$\mathcal{E}\subset \mathcal{V}\times\mathcal{V}$. For a node
$i\in\mathcal{V}$, let $\mathcal{N}_i$ denote the neighbor set of
$i$. We have
\begin{displaymath}
\mathcal{N}_i = \{j: j\in\mathcal{V}, (j,i)\in \mathcal{E}\}.
\end{displaymath}
By $(j,i)\in\mathcal{E}$, we mean an edge directed from node $j$ to
node $i$ (node $i$ receives information from node $j$). We consider
graphs with self loops, that is, if $i\in\mathcal{V}$, then
$(i,i)\in\mathcal{E}$. Let $x_i(k)\in\mathbb{R}$ be the value of
node $i$ at time step $k$. Then, the max-consensus protocol that we
consider, for node $i$, is as follows:
\begin{equation}
\label{Consensus:Equation:MaxConsensus:MaxConsensusProtocol}
  x_i(k+1) = \max_{j\in\mathcal{N}_i}\{x_j(k)\}\enspace, \quad k\in\mathbb{Z}_{\ge 0}.
\end{equation}
Now the max-consensus problem can be stated as follows:
\begin{quote}
{\it Given a digraph $\mathcal{G}=(\mathcal{V},\mathcal{E})$, does
there exist a $n\in\mathbb{N}$ such that for all $k\geq n,$ $
x_i(k)$ $=$ $\max_{j\in\mathcal{V}}\{x_j(0)\}$ for all
$i\in\mathcal{V}$?}
\end{quote}
That is, the value at each node converges to a value which is equal
to the maximum of all the initial nodal values.

The max-consensus problem is nonlinear under usual matrix algebra.
Hence, we switch to an algebra to `linearize' the max-consensus
problem. This will enable us for natural extensions and
generalizations like analysis of the max-consensus under switching
topologies where the underlying graph changes with time. The
suitable framework for the max-consensus problem is the
tropical algebra. Here we give a brief overview of the tropical
algebra (for details, see \citep{maxplus,Izhakian:2008:TropicalAlgebraicSets}).

 In tropical algebra, we consider a
semiring $(\mathbb{R}\cup \{-\infty\},\oplus,\otimes)$ called the
tropical semiring with addition and multiplication defined as
follows. For $a,b \in \mathbb{R}\cup \{-\infty\},$ define $a\oplus b
= \max(a,b)$ and $a\otimes b = a + b$ and one can show that
$(\mathbb{R}\cup \{-\infty\},\oplus,\otimes)$ has a semiring
structure. $(\mathbb{R}\cup \{-\infty\},\oplus,\otimes)$ is called
tropical semiring.
The zero element of this semiring is
$-\infty$ and the multiplicative identity is $0$.

For the purpose of analyzing the max-consensus problem, we require
only a tropical sub-semiring with two elements $0$ and $-\infty$. We
denote this binary tropical sub-semiring as $
(\mathbb{T},\oplus,\otimes)$, where $\mathbb{T}=\{0,-\infty\}$.

\section{Problem Reformulation}
\label{Consensus:SubSection:MaxConsensus:ProblemReformulation} We
reformulate the max consensus problem using tropical
algebra. Towards this, we define the tropical adjacency matrix of a
graph as follows.
\begin{definition}
{\label{Consensus:Definition:MaxConsensus:TropicalAdjecencyMatrix}
The tropical adjacency matrix of a graph
$\mathcal{G}=(\mathcal{V},\mathcal{E})$ is a matrix $A \in
\mathbb{T}^{N\times N}$, where $\mathcal{V}=\{1,\ldots,N\}$, with
$(i,j)^\mathrm{th}$ entry defined as follows.
\[(A)_{i,j} = \left\{
\begin{array}{ll} 0 & \mbox{if $i=j$, or $(j,i) \in \mathcal{E}$} \\-\infty &
\mbox{otherwise.}
\end{array}
\right.\]}
\end{definition}
Since we consider graphs with self loops, all tropical adjacency
matrices will have diagonal entries as zeros.

The tropical linear algebra operations involving matrices and
vectors is as follows. For tropical adjacency matrices
$A,B\in\mathbb{T}^{N\times N}$ and a vector $\mathbf{x} =
(x_1,\ldots,x_N)\in \mathbb{R}^N$, we have
\begin{align}
(A\oplus B)_{ij} &= a_{ij} \oplus b_{ij} \notag\\
& = \max\{a_{ij},b_{ij}\}
\label{Consensus:Equation:MaxConsensus:TropicalAddition}
\end{align}
\begin{align}
(A\otimes B)_{ij} &= \bigoplus_l \left( a_{il} \otimes b_{lj} \right) \notag\\
& = \max_l\{a_{il}+b_{lj}\}
\label{Consensus:Equation:MaxConsensus:TropicalMultiplication}
\end{align}
\begin{align}
(A\otimes \mathbf{x})_{i} &= \bigoplus_j\left(a_{ij} \otimes {x}_{j}\right) \notag \\
& = \max_j\{a_{ij}+{x}_{j}\},
\label{Consensus:Equation:MaxConsensus:TropicalVectorMultiplication}
\end{align}
where $a_{ij}=(A)_{ij}$ and $b_{ij}=(B)_{ij}$ with $(\cdot)_{ij}$
denoting the $(i,j)^{\rm th}$ element. The max-consensus protocol in
Eq.~\eqref{Consensus:Equation:MaxConsensus:MaxConsensusProtocol} can
now be rewritten using tropical algebraic operations as follows.
\begin{align}
x_i(k+1) &= \max_{j\in\mathcal{N}_i} \left\{ x_j(k)\right\} \notag\\
& = \max \left\{
\max_{j\in\mathcal{N}_i}\{0+x_j(k)\},\max_{j\in\mathcal{V}\setminus
\mathcal{N}_i}\{-\infty+x_j(k)\}\right\}\enspace.
\label{Consensus:Equation:MaxConsensus:MaxExpanded}
\end{align}
If $A$ is the tropical adjacency matrix corresponding to graph
$\mathcal{G}$, then from
Definition~\ref{Consensus:Definition:MaxConsensus:TropicalAdjecencyMatrix}
and Eq.~\eqref{Consensus:Equation:MaxConsensus:MaxExpanded}, it
follows that
\begin{align*}
x_i(k+1) &= \max_{j\in\mathcal{V}} \left\{(A)_{i,j} + x_j(k)\right\}
\\
& = \bigoplus_{j=1}^n \left((A)_{i,j}\otimes x_{j}(k)\right).
\end{align*}
Using the notation $\mathbf{x}(k) = (x_1(k),\ldots,x_N(k))^T$, we
have
\begin{displaymath}
x_i(k+1) = \left(A \otimes \mathbf{x}(k)\right)_{i}, \quad \mbox{for all}\:\:\:
i\in\mathcal{V}.
\end{displaymath}
With the understanding that by product we mean tropical product, the
max-consensus problem can be written as
\[ \mathbf{x}(k+1) = A \mathbf{x}(k) \]
or equivalently
\begin{equation}
\label{Consensus:Equation:MaxProtocol:InMaxPlusAlgebra}
\mathbf{x}(k) = A^k \mathbf{x}(0),
\end{equation}
 where
$\mathbf{x}(0) = (x_1(0),\ldots,x_N(0))^T\in\mathbb{R}^n$ is the
vector of initial values of nodes of $\mathcal{G}$ and by $A^k$ we
mean $\underbrace{A\otimes \cdots \otimes A}_{k \;\mathrm{ times}}$.

Equation~\eqref{Consensus:Equation:MaxProtocol:InMaxPlusAlgebra} is,
in form, similar to the linear consensus protocol \citep{olfati1,ren}
and its convergence can be analyzed by looking at the behavior of
$A^k$ for large $k$.

\section{Convergence Analysis}
\label{Consensus:SubSection:MaxConsensus:ConvergenceAnalysis}
Towards deriving the convergence criterion for max-consensus
protocol we first establish certain key properties of tropical
adjacency matrix multiplication.
\begin{definition}
{\label{Consensus:Definition:MaxConsensus:DependencyGraph} Given a
tropical adjacency matrix $A\in\mathbb{T}^{N\times N}$, its 
dependency graph, denoted as $\mathcal{G}(A)$, is a graph
$(\mathcal{V},\mathcal{E})$ with $\mathcal{V}=\{1,\ldots,N\}$ and
$(i,j)\in\mathcal{E}$ if $(A)_{ji}=0$.}
\end{definition}
In the sequel, we denote the edge set of the dependency graph
$\mathcal{G}(A)$ as $\mathcal{E}(A)$. We have the following observations.

\begin{lemma}
  {\label{Consensus:Claim:MaxConsensus:ClaimInProductofAdjecencyMatrices}
    Let $A,B\in \mathbb{T}^{N\times N}$ be tropical adjacency matrices and
    if any one of the following holds
    \begin{enumerate}
    \item $(j,i)\in\mathcal{E}(A)$,
    \item $(j,i)\in\mathcal{E}(B)$, and
    \item $\exists l$ such that $(l,i)\in\mathcal{E}(A)$ and
      $(j,l)\in\mathcal{E}(B)$
    \end{enumerate}
    then $c_{ij}=0$. }
\end{lemma}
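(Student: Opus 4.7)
The plan is to unwind the definition of the tropical matrix product. Here $c_{ij}$ refers to the $(i,j)$-entry of the product $C = A \otimes B$, which by Equation~\eqref{Consensus:Equation:MaxConsensus:TropicalMultiplication} equals $\max_l \{a_{il} + b_{lj}\}$. Since all entries of $A$ and $B$ lie in $\mathbb{T} = \{0,-\infty\}$, the sum $a_{il} + b_{lj}$ is either $0$ (when both summands are $0$) or $-\infty$. Consequently $c_{ij} = 0$ if and only if there is at least one index $l$ for which $a_{il} = 0$ and $b_{lj} = 0$, so the entire argument reduces to producing such a witness $l$ under each of the three hypotheses.

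Next I would translate the hypotheses into statements about matrix entries using Definition~\ref{Consensus:Definition:MaxConsensus:DependencyGraph}: the edge $(p,q) \in \mathcal{E}(M)$ is equivalent to $(M)_{qp} = 0$. Thus hypothesis (1) reads $a_{ij} = 0$, hypothesis (2) reads $b_{ij} = 0$, and hypothesis (3) provides an index $l$ with $a_{il} = 0$ and $b_{lj} = 0$.

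Case (3) is then immediate: the index $l$ supplied by the hypothesis is precisely the witness needed. For cases (1) and (2) the extra ingredient is that tropical adjacency matrices correspond to graphs with self-loops, so every diagonal entry is $0$. In case (1), I choose $l = j$, giving $a_{ij} + b_{jj} = 0 + 0 = 0$. In case (2), I choose $l = i$, giving $a_{ii} + b_{ij} = 0 + 0 = 0$. Either way the maximum defining $c_{ij}$ is attained at $0$.

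There is no serious obstacle. The only subtlety is the index convention in the definition of the dependency graph, which reverses the roles of the two indices; once that is unraveled correctly the argument is a one-line verification, and the self-loop convention is precisely what makes cases (1) and (2) go through without any further assumption on the other matrix.
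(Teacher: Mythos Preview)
Your proposal is correct and follows essentially the same approach as the paper: in each case you exhibit an index $l$ with $a_{il}=b_{lj}=0$, using the self-loop convention ($b_{jj}=0$ for Case~1, $a_{ii}=0$ for Case~2) and taking the given $l$ directly in Case~3. The only cosmetic difference is that you state the criterion ``$c_{ij}=0$ iff some $a_{il}+b_{lj}=0$'' explicitly at the outset, whereas the paper unfolds the maximum separately in each case.
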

\begin{proof}
  {\noindent Case~1:
    We have $a_{ij}=0$ as $(j,i)\in\mathcal{E}(A)$,  and $b_{jj}=0$
    since it is diagonal element. This gives us that $a_{ij} + b_{jj}=0$
    and therefore from Eq.~(\ref{sij}) (also using the fact that
    $c_{ij}\in\{0,-\infty\}$), we have
    \begin{align*}
      c_{ij} &= \max\{\max_{l\in\mathcal{V}\setminus j} (a_{il} + b_{lj}),(a_{ij} + b_{jj})\}\\
      &= \max\{\max_{l\in\mathcal{V}\setminus j} (a_{il} + b_{lj}),0\}=0.
    \end{align*}
    Case~2:
    Follows from similar arguments as in Case 1.\\
    Case~3:
    We have $a_{il}=0$ as $(l,i)\in\mathcal{E}(A)$, and $b_{lj}=0$ as
    $(j,l)\in\mathcal{E}(B)$. Therefore $a_{il} + b_{lj}=0$ which leads
    us to the conclusion that $c_{ij}=0$ using arguments similar to that
    in Case~1. }
\end{proof}

Now, we have the following lemma.
\begin{lemma}
  {\label{Consensus:Lemma:MaxConsensus:ProductofAdjecencyMatrices} Let
    $A,B\in \mathbb{T}^{N\times N}$ be tropical adjacency matrices.
    Then, $C = A\otimes B$ is also a tropical adjacency matrix.
    Moreover, $\mathcal{E}(C)\supseteq\mathcal{E}(A)\cup\mathcal{E}(B)$
    and if there exists a node $l$ such that $(l,i)\in\mathcal{E}(A)$
    and $(j,l)\in\mathcal{E}(B)$, then $(j,i)\in\mathcal{E}(C)$.}
\end{lemma}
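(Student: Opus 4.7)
The plan is to unpack the three assertions and discharge each using Lemma~\ref{Consensus:Claim:MaxConsensus:ClaimInProductofAdjecencyMatrices}, with only a short routine verification for the structural claim that $C$ is itself a tropical adjacency matrix.

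First I would check that $C \in \mathbb{T}^{N\times N}$ with zero diagonal. Since every entry of $A$ and $B$ lies in $\{0,-\infty\}$, every tropical product $a_{il}\otimes b_{lj}=a_{il}+b_{lj}$ lies in $\{0,-\infty\}$ (the only cases are $0+0=0$ and anything involving $-\infty$ gives $-\infty$). Taking the tropical sum (i.e., the maximum) of such terms keeps the result in $\{0,-\infty\}$, so $c_{ij}\in\mathbb{T}$. For the diagonal, the term $l=i$ contributes $a_{ii}\otimes b_{ii}=0\otimes 0=0$, forcing $c_{ii}=0$. This is exactly the definition of a tropical adjacency matrix.

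Next I would translate the index convention carefully: by Definition~\ref{Consensus:Definition:MaxConsensus:DependencyGraph}, $(j,i)\in\mathcal{E}(M)$ iff $(M)_{ij}=0$. So to show $\mathcal{E}(C)\supseteq\mathcal{E}(A)\cup\mathcal{E}(B)$, I pick $(j,i)\in\mathcal{E}(A)$, which means $a_{ij}=0$, and apply Case~1 of Lemma~\ref{Consensus:Claim:MaxConsensus:ClaimInProductofAdjecencyMatrices} to conclude $c_{ij}=0$, hence $(j,i)\in\mathcal{E}(C)$. The case $(j,i)\in\mathcal{E}(B)$ is identical using Case~2. Finally, the composition statement — if there exists $l$ with $(l,i)\in\mathcal{E}(A)$ and $(j,l)\in\mathcal{E}(B)$ then $(j,i)\in\mathcal{E}(C)$ — is precisely Case~3 of the previous lemma, so no further work is needed.

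The hard part here is essentially bookkeeping rather than mathematics: because the dependency graph uses the transposed indexing $(j,i)\in\mathcal{E}(M)\iff M_{ij}=0$, one has to be scrupulous in lining up which index of $c_{ij}$ corresponds to the tail and which to the head of the edge. Once that convention is fixed, the whole lemma is a one-line consequence of the three cases established in Lemma~\ref{Consensus:Claim:MaxConsensus:ClaimInProductofAdjecencyMatrices}, together with the closure check in the first paragraph.
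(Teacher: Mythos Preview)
Your proposal is correct and follows essentially the same route as the paper: both arguments verify closure in $\mathbb{T}$ for the entries of $C$, check the diagonal is zero, and then read off the edge-set containments directly from the three cases of Lemma~\ref{Consensus:Claim:MaxConsensus:ClaimInProductofAdjecencyMatrices}. The only cosmetic difference is that the paper phrases the closure of entries via the semiring property of $(\{0,-\infty\},\oplus,\otimes)$ and cites Lemma~\ref{Consensus:Claim:MaxConsensus:ClaimInProductofAdjecencyMatrices} even for $c_{ii}=0$, whereas you compute these directly; the content is identical.
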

\begin{proof}
  {Let $a_{ij},b_{ij},$ and $c_{ij}$ be the $(i,j)^\mathrm{th}$
    element of $A,B,$ and $C$ respectively. Since $C=A\otimes B$, from
    matrix multiplication rule,
    Eq.~\eqref{Consensus:Equation:MaxConsensus:TropicalMultiplication},
    we get
    \begin{align}
      c_{ij} &= \bigoplus_{l=1}^N (a_{il}\otimes b_{lj}) \label{sijtrop}\\
      &= \max_l (a_{il} + b_{lj}) \label{sij}.
    \end{align}
    Since $a_{ij},b_{ij}\in\{0,-\infty\}$ and
    $(\{0,-\infty\},\oplus,\otimes)$ is a semiring (that is, closed
    under operations $\oplus$ and $\otimes$), from Eq.~(\ref{sijtrop}) we have
    $c_{ij}\in\{0,-\infty\}$.
    If $c_{ij}=0$, then $(j,i)\in\mathcal{E}(C)$.

    From
    Lemma~\ref{Consensus:Claim:MaxConsensus:ClaimInProductofAdjecencyMatrices}
    and the fact that $A$ and $B$ are tropical  
    adjacency matrices, it is clear that $c_{ii} = 0$ for all
    $i\in\{1,\ldots,N\}$. Thus $C$ is a tropical adjacency matrix since
    all its entries belong to $\mathbb{T}$ and all the diagonal entries
    are zeros. Also immediate are the other claims of the lemma.}
\end{proof}

    From Lemma~\ref{Consensus:Lemma:MaxConsensus:ProductofAdjecencyMatrices},
    we observe that the tropical adjacency matrix multiplication is
    non-commutative which is inherited from the non-commutativity of
    matrix multiplication. The essence of
    Lemma~\ref{Consensus:Lemma:MaxConsensus:ProductofAdjecencyMatrices}
    is that the tropical adjacency matrix multiplication is
    superadditive with respect to the corresponding edge sets. Thus from
    this lemma, we have an immediate observation.
    \begin{proposition}
      {\label{Consensus:Proposition:MaxConsensus:ZeroEntryInvariance}The
        $0$ entries in the tropical adjacency matrices are not altered by
        adjacency matrix multiplications. Moreover, if $A$ is any adjacency
        matrix $A\mathbf{0}=\mathbf{0}A = \mathbf{0}$.}
    \end{proposition}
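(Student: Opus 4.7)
The plan is to observe that both halves of the proposition are immediate consequences of Lemma~\ref{Consensus:Lemma:MaxConsensus:ProductofAdjecencyMatrices} together with the semiring structure of $(\mathbb{T},\oplus,\otimes)$, so the write-up will be short.

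For the first claim I would argue as follows. Suppose $A$ is a tropical adjacency matrix with $(A)_{ij}=0$, which by Definition~\ref{Consensus:Definition:MaxConsensus:TropicalAdjecencyMatrix} means $(j,i)\in\mathcal{E}(A)$. If $B$ is any other tropical adjacency matrix, Lemma~\ref{Consensus:Lemma:MaxConsensus:ProductofAdjecencyMatrices} gives $\mathcal{E}(A\otimes B)\supseteq \mathcal{E}(A)\cup\mathcal{E}(B)$, so $(j,i)\in \mathcal{E}(A\otimes B)$ and hence $(A\otimes B)_{ij}=0$; the symmetric argument applies to $B\otimes A$. Iterating this over any finite product of adjacency matrices shows that a $0$ entry in any factor is inherited by the product, so tropical multiplication by adjacency matrices can only create new $0$ entries, never destroy existing ones.

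For the second claim, let $\mathbf{0}$ denote the tropical zero matrix whose entries are all $-\infty$, i.e., the additive identity of $(\mathbb{T}^{N\times N},\oplus,\otimes)$. Unpacking the definition of tropical multiplication, the $(i,j)$ entry of $A\otimes\mathbf{0}$ is $\max_l\{a_{il}+(-\infty)\}=-\infty$ (using the convention $r+(-\infty)=-\infty$ for all $r\in\mathbb{T}$), and symmetrically for $\mathbf{0}\otimes A$. This is just the standard absorbing property of the additive identity in a semiring.

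Since both claims fall directly out of results already in hand, there is no real technical obstacle; the proposition is included as a bookkeeping statement, recording two invariants that will be invoked in the later convergence analysis.
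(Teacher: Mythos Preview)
Your treatment of the first claim is fine and matches the paper: the edge-set superadditivity of Lemma~\ref{Consensus:Lemma:MaxConsensus:ProductofAdjecencyMatrices} immediately forces every $0$ entry in a factor to persist in any product.

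However, your second paragraph rests on a misreading of the symbol $\mathbf{0}$. In this paper $\mathbf{0}$ is \emph{not} the additive identity (the all-$(-\infty)$ matrix); it is the tropical adjacency matrix whose every entry equals the real number $0$, i.e.\ the adjacency matrix of the completely connected graph (see the sentence immediately following the proposition and the remark after Definition~\ref{Consensus:Definition:MaxConsensus:CompletelyConnectedGraph}). Note in particular that the all-$(-\infty)$ matrix is not even a tropical adjacency matrix in the paper's sense, since Definition~\ref{Consensus:Definition:MaxConsensus:TropicalAdjecencyMatrix} requires all diagonal entries to be $0$. So the ``absorbing additive identity'' argument you give proves the wrong statement.

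The correct argument for $A\otimes\mathbf{0}=\mathbf{0}\otimes A=\mathbf{0}$ is again just Lemma~\ref{Consensus:Lemma:MaxConsensus:ProductofAdjecencyMatrices}: since $\mathcal{E}(\mathbf{0})=\mathcal{V}\times\mathcal{V}$, we have $\mathcal{E}(A\otimes\mathbf{0})\supseteq\mathcal{E}(\mathbf{0})=\mathcal{V}\times\mathcal{V}$, so every entry of the product is $0$. Equivalently, compute directly: $(A\otimes\mathbf{0})_{ij}=\max_{l}(a_{il}+0)=\max_{l}a_{il}=0$ because $a_{ii}=0$, and symmetrically for $\mathbf{0}\otimes A$. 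This is what the paper means by saying the proposition is an ``immediate observation'' from the lemma, and it is this absorbing property of the \emph{completely connected} adjacency matrix that is actually used downstream (e.g.\ in Theorem~\ref{Consensus:Theorem:MaxConsensus:ConvergenceTheorem:AdjecencyMatrix}).
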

    In the above proposition and sequel, $\mathbf{0}$ corresponds to
    tropical adjacency matrix with all entries as zeros. An implication
    of above proposition is that, once a consensus is achieved, it
    becomes independent of the underlying graph structure. Thus, using a
    max-consensus protocol, a consensus, if achieved, is achieved in
    finite time. This is in contrast to other consensus protocols
    (average consensus protocol, for example) where the consensus is
    achieved asymptotically.
    
    We have the following definitions.
    \begin{definition}
{\label{Consensus:Definition:MaxConsensus:StronglyConnectedGraph} A
graph $\mathcal{G}=(\mathcal{V},\mathcal{E})$ is strongly
connected if for every $i,j\in\mathcal{V}$ there exists a sequence
of nodes (called a path from $i$ to $j$) in $\mathcal{V}$,
$i=i_0,i_1,\ldots,i_s=j$, such that $(i_{l-1},i_l)\in\mathcal{E}$
for $l\in\{1,\ldots,s\}$ and $s\leq N-1$.}
\end{definition}
Equivalently, in a strongly connected graph, it is possible to start
at any node and reach any other node by following the directed edges
of the graph.
\begin{definition}
{\label{Consensus:Definition:MaxConsensus:CompletelyConnectedGraph}A
completely connected graph is a graph in which there is a
directed edge from every node to every other node.}
\end{definition}
The tropical adjacency matrix that corresponds to a completely
connected graph is the zero matrix $\mathbf{0}$.
The shortest distance/path length from $i$ to $j$ is the least
number of directed edges to follow from node $i$ to reach node $j$.
\begin{definition}
{\label{Consensus:Definition:MaxConsensus:DiameterofGraph} Let
$d_{ij}$ denote the shortest path length from  node $i$ to node $j$
in a graph $\mathcal{G} = (\mathcal{V},\mathcal{E})$, where
$i,j\in\mathcal{V}$. Then, diameter of the graph, $d =
\max_{i,j\in\mathcal{V}} d_{ij}$. }
\end{definition}
If the graph is not strongly connected, there may not be a path
between some of the nodes and in that case, we assign a value of
$\infty$ to the diameter.
\begin{definition}
{\label{Consensus:Definition:MaxConsensus:kNeighbor} For all
$i,j\in\mathcal{V}$, node $j$ is a $p$-neighbor of $i$ if the
shortest path length from node $j$ to $i$ is less than or equal to
$p$. We denote the $p$-neighbor set of $i$ as $\mathcal{N}_i^p$.}
\end{definition}
From the above definition, it follows that
$\mathcal{N}_i^1=\mathcal{N}_i$ and the following recursion formula
holds
\begin{equation*}
\mathcal{N}_i^p = \left(\bigcup_{l\in\mathcal{N}_i^{p-1}}
\{j:j\in\mathcal{N}_l\} \right) \bigcup \mathcal{N}_i^{p-1}
\end{equation*}
We have the following proposition which immediately follows from the
definitions of strongly connected graph, diameter of a graph, and
the $p$-neighbor set of a node.
\begin{proposition}
{\label{Consensus:Proposition:MaxConsensus:dNeighborCompleteVertexSet}
For a strongly connected graph with vertex set $\mathcal{V}$ and
having diameter $d$, $\mathcal{N}_i^d=\mathcal{V}$ for all
$i\in\mathcal{V}$.}
\end{proposition}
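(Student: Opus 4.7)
The plan is to prove the proposition by direct unwinding of the three definitions involved: strong connectivity, diameter of a graph, and $p$-neighbor set. The claim reduces to showing that every node $j\in\mathcal{V}$ has a directed path into $i$ of length at most $d$.

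First, I would fix an arbitrary $i\in\mathcal{V}$ and an arbitrary $j\in\mathcal{V}$ and show $j\in\mathcal{N}_i^d$. Strong connectivity of $\mathcal{G}$ (Definition~\ref{Consensus:Definition:MaxConsensus:StronglyConnectedGraph}) guarantees the existence of a directed path from $j$ to $i$; in particular, the shortest-path length $d_{ji}$ is a well-defined nonnegative integer (finite). Next, by the definition of the graph diameter (Definition~\ref{Consensus:Definition:MaxConsensus:DiameterofGraph}) we have $d_{ji}\le \max_{u,v\in\mathcal{V}} d_{uv} = d$. Finally, by Definition~\ref{Consensus:Definition:MaxConsensus:kNeighbor}, $d_{ji}\le d$ is exactly the condition for $j$ to belong to $\mathcal{N}_i^d$. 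This yields $\mathcal{V}\subseteq \mathcal{N}_i^d$, and the reverse inclusion $\mathcal{N}_i^d\subseteq\mathcal{V}$ is immediate from the definition of $p$-neighbor set as a subset of $\mathcal{V}$.

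There is essentially no obstacle here: the proposition is a direct definitional consequence, and no induction on $p$ or invocation of the recursion $\mathcal{N}_i^p=\bigl(\bigcup_{l\in\mathcal{N}_i^{p-1}}\{j:j\in\mathcal{N}_l\}\bigr)\cup \mathcal{N}_i^{p-1}$ is needed. One could alternatively give an inductive proof by verifying that $\mathcal{N}_i^p$ contains every node whose shortest distance into $i$ is at most $p$, but this is really just re-deriving the definition. The cleanest presentation is the three-line chain ``strong connectivity $\Rightarrow d_{ji}<\infty$; diameter $\Rightarrow d_{ji}\le d$; $p$-neighbor definition $\Rightarrow j\in\mathcal{N}_i^d$.'' Since $i$ and $j$ were arbitrary, the conclusion $\mathcal{N}_i^d=\mathcal{V}$ for all $i\in\mathcal{V}$ follows.
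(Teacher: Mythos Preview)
Your proposal is correct and matches the paper's approach exactly: the paper does not give a separate proof but simply remarks that the proposition ``immediately follows from the definitions of strongly connected graph, diameter of a graph, and the $p$-neighbor set of a node,'' which is precisely the three-step definitional chain you spell out. There is nothing to add.
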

For a strongly connected graph $\mathcal{G}(A)$ of diameter $d$,
$\mathcal{G}(A^d)$ is a completely connected graph as shown in the
proof of the following lemma, which leads to the main result.
\begin{lemma}
{\label{Consensus:Lemma:MaxConsensus:AdjecencyMarixNilPotent} Let
$A\in\mathbb{T}^{n\times n}$ be a tropical adjacency matrix. If
$\mathcal{G}(A)$ is strongly connected, then $A^d=\mathbf{0}$ where
$d$ is the diameter of $\mathcal{G}(A)$}
\end{lemma}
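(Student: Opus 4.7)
The plan is to establish the following auxiliary claim by induction on $k \geq 1$: if there exists a directed path of length at most $k$ from $j$ to $i$ in $\mathcal{G}(A)$, then $(j,i) \in \mathcal{E}(A^k)$. Granted this, Proposition \ref{Consensus:Proposition:MaxConsensus:dNeighborCompleteVertexSet} guarantees that in a strongly connected graph of diameter $d$ every ordered pair $(j,i)$ is connected by a path of length at most $d$, so $\mathcal{E}(A^d) = \mathcal{V}\times\mathcal{V}$; and since $A^d$ is itself a tropical adjacency matrix by Lemma \ref{Consensus:Lemma:MaxConsensus:ProductofAdjecencyMatrices}, having all possible edges in $\mathcal{E}(A^d)$ forces every entry of $A^d$ to equal $0$, i.e.\ $A^d = \mathbf{0}$.

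The base case $k = 1$ of the auxiliary claim is immediate: a path of length $1$ means $(j,i) \in \mathcal{E}(A)$, and a path of length $0$ means $j = i$, which is covered by the self-loop (zero diagonal) of $A$. For the inductive step, assume the claim for $k-1$ and let $j = j_0, j_1, \ldots, j_m = i$ be a path of length $m \leq k$. Writing $A^k = A \otimes A^{k-1}$, I consider two subcases. When $m \geq 1$, set $l := j_{m-1}$; then $(l,i) \in \mathcal{E}(A)$ directly, while $j_0 \to j_1 \to \cdots \to j_{m-1} = l$ is a path of length $m - 1 \leq k - 1$, so by the inductive hypothesis $(j,l) \in \mathcal{E}(A^{k-1})$. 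Case~3 of Lemma \ref{Consensus:Claim:MaxConsensus:ClaimInProductofAdjecencyMatrices}, applied inside the proof of Lemma \ref{Consensus:Lemma:MaxConsensus:ProductofAdjecencyMatrices}, then yields $(j,i) \in \mathcal{E}(A \otimes A^{k-1}) = \mathcal{E}(A^k)$. When $m = 0$, the conclusion $(i,i) \in \mathcal{E}(A^k)$ holds automatically because $A^k$ has zero diagonal.

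I do not expect any deep obstacle: all the real content has been packaged into Lemma \ref{Consensus:Lemma:MaxConsensus:ProductofAdjecencyMatrices}, and what remains is a combinatorial bookkeeping argument matching walks in $\mathcal{G}(A)$ with zero entries of $A^k$. The only mild subtlety is handling paths that are strictly shorter than $k$, and this is absorbed cleanly by the self-loops at every vertex (equivalently, the zero diagonals of tropical adjacency matrices), which let any short path be padded into a walk of the required length. Once the auxiliary claim is in place, the strong-connectedness hypothesis together with the definition of diameter closes the argument in a single line.
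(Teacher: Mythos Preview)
Your proposal is correct and follows essentially the same approach as the paper: both arguments use Lemma~\ref{Consensus:Lemma:MaxConsensus:ProductofAdjecencyMatrices} inductively to identify the neighbor set of $i$ in $\mathcal{G}(A^k)$ with the $k$-neighbor set $\mathcal{N}_i^k$, and then invoke Proposition~\ref{Consensus:Proposition:MaxConsensus:dNeighborCompleteVertexSet} at $k=d$ to conclude that $\mathcal{G}(A^d)$ is completely connected. Your version spells out the induction on path length more explicitly (and handles the padding via self-loops carefully), whereas the paper phrases the same induction in terms of the recursion for $\mathcal{N}_i^k$, but the underlying argument is the same.
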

\begin{proof}
{Let $\mathcal{G}_k=\mathcal{G}(A^k)$ for $k\in\mathbb{N}$. Let
$\mathcal{N}_i^1=\mathcal{N}_i$ be the neighbor set of node $i$ in
$\mathcal{G}_1$. Using a special case of
Lemma~\ref{Consensus:Lemma:MaxConsensus:ProductofAdjecencyMatrices}
(where $B$ equal to $A$), it follows that the neighbor set of node
$i$ in $\mathcal{G}_2$ is $\mathcal{N}_i^2$. Continuing the same
argument, the neighbor set of node $i$ in graph $\mathcal{G}_k$ is
$\mathcal{N}_i^k$. For, $k=d$, by
Proposition~\ref{Consensus:Proposition:MaxConsensus:dNeighborCompleteVertexSet},
we have $\mathcal{G}_d$ as a completely connected graph and thus the
claim of lemma follows.}
\end{proof}
We have the following necessary and sufficient condition for
convergence of max-consensus protocol.
\begin{theorem}
{\label{Consensus:Theorem:MaxConsensus:ConvergenceTheorem:AdjecencyMatrix}
The max-consensus protocol in
(\ref{Consensus:Equation:MaxConsensus:MaxConsensusProtocol})
converges for all initial conditions if and only if there exists a
$k\in\mathbb{N}$ such that $A^k=\mathbf{0}$.}
\end{theorem}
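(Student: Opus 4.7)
The plan is to prove the equivalence by splitting into sufficiency and necessity, leveraging the zero-entry invariance proposition for one direction and a carefully chosen family of probing initial conditions for the other.

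For sufficiency, I would assume $A^k = \mathbf{0}$ for some $k \in \mathbb{N}$. Applied to any $\mathbf{x}(0) \in \mathbb{R}^N$, the tropical matrix-vector product gives $x_i(k) = (A^k \mathbf{x}(0))_i = \max_{j} \{0 + x_j(0)\} = \max_{j \in \mathcal{V}} x_j(0)$ for every $i$, which is exactly the max-consensus value. To show consensus persists for all $k' \geq k$, I would use Proposition~\ref{Consensus:Proposition:MaxConsensus:ZeroEntryInvariance}: since $A \otimes \mathbf{0} = \mathbf{0}$, a trivial induction gives $A^{k'} = \mathbf{0}$, so $\mathbf{x}(k') = \mathbf{x}(k)$ for all $k' \geq k$.

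For necessity, I would assume the protocol converges for every initial condition and construct, for each $m \in \mathcal{V}$, a specific initial condition $\mathbf{x}^{(m)}(0)$ with $x_m^{(m)}(0) = 1$ and $x_j^{(m)}(0) = 0$ for $j \neq m$. Convergence demands an $n_m$ such that for all $k \geq n_m$ and all $i$, $x_i^{(m)}(k) = \max_j x_j^{(m)}(0) = 1$. Computing $x_i^{(m)}(k) = \max_j \{(A^k)_{ij} + x_j^{(m)}(0)\}$, and noting every $(A^k)_{ij} \in \{0, -\infty\}$ (by Lemma~\ref{Consensus:Lemma:MaxConsensus:ProductofAdjecencyMatrices}, $A^k$ is a tropical adjacency matrix), the maximum can equal $1$ only if the term $(A^k)_{im} + 1$ contributes $1$, forcing $(A^k)_{im} = 0$. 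Letting $n = \max_{m \in \mathcal{V}} n_m$, we conclude $(A^n)_{im} = 0$ for all $i, m$, i.e., $A^n = \mathbf{0}$.

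I do not anticipate a serious obstacle: the tropical linearization has already reduced convergence to the behavior of matrix powers, and Proposition~\ref{Consensus:Proposition:MaxConsensus:ZeroEntryInvariance} together with Lemma~\ref{Consensus:Lemma:MaxConsensus:ProductofAdjecencyMatrices} does most of the structural work. The one point requiring care is the necessity direction, where it is tempting to use only a single initial condition; I would emphasize that one must range $m$ over all of $\mathcal{V}$ in order to certify that every column of $A^n$ is identically $0$, not just a particular one. Apart from this choice, the argument is essentially a direct unwinding of the tropical matrix action on the probing vectors.
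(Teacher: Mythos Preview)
Your proof is correct and follows essentially the same approach as the paper: sufficiency by direct computation of $A^k\mathbf{x}(0)$ when $A^k=\mathbf{0}$, and necessity by choosing initial conditions in which a single designated node carries the strict maximum, so that failure of the corresponding column of $A^k$ to be all zeros prevents that node's value from propagating. The paper phrases necessity as a contrapositive (one $-\infty$ entry in $A^k$ and one bad initial condition), while you argue it directly with $N$ probing vectors and take $n=\max_m n_m$; your version is a bit more careful with the quantifiers, but the underlying idea is identical.
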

\begin{proof}
{\noindent {\em if:} \\
If $A^k=\mathbf{0}$ for some $k\in\mathbb{N}$, then for all
$i\in\mathcal{V}$
\begin{align*}
x_i(k+1) &= \left(A^k \mathbf{x}(0)\right)_i \\
&= \max\{0+x_1(0),\ldots,0+x_N(0)\} \\
&= \max_{j\in\mathcal{V}}\{x_j(0)\}
\end{align*}
{\em only if:}\\
Suppose $A^k$ has at least one non-zero entry for all
$k\in\mathbb{N}$. Let this be the $(i,j)^\mathrm{th}$ entry, that
is, $\left(A\right)_{i,j} = -\infty$. So we have
\begin{align*}
x_i(k+1) &= \max\{0+x_1(0),\ldots,-\infty+x_j(0),\ldots,0+x_N(0)
\}\\
&= \max\{x_1(0),\ldots,x_{j-1}(0),x_{j+1}(0),\ldots,x_N(0)\}
\end{align*}
 This means that the node $i$ does not converge to the
maximum of all initial nodal values if $x_j(0)$ happens to be the
maximum.
  }
\end{proof}
The above result is used to prove the following theorem which is the
main result of this section.
\begin{theorem}
{\label{Consensus:Theorem:MaxConsensus:ConvergenceTheorem:GraphConnectivity}
The max-consensus protocol in
(\ref{Consensus:Equation:MaxConsensus:MaxConsensusProtocol})
converges for all initial conditions if and only if $\mathcal{G}(A)$
is strongly connected.}
\end{theorem}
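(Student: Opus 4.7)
The plan is to reduce this theorem to Theorem~\ref{Consensus:Theorem:MaxConsensus:ConvergenceTheorem:AdjecencyMatrix} and then characterize when $A^k = \mathbf{0}$ is achievable in terms of the connectivity of $\mathcal{G}(A)$. By that earlier theorem, convergence for all initial conditions is equivalent to the existence of some $k \in \mathbb{N}$ with $A^k = \mathbf{0}$. So it suffices to prove that such a $k$ exists if and only if $\mathcal{G}(A)$ is strongly connected.

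For the \emph{if} direction, the work has essentially been done already in Lemma~\ref{Consensus:Lemma:MaxConsensus:AdjecencyMarixNilPotent}: if $\mathcal{G}(A)$ is strongly connected with diameter $d$, then $A^d = \mathbf{0}$. So I would simply invoke that lemma and take $k = d$.

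For the \emph{only if} direction, I would argue by contrapositive: suppose $\mathcal{G}(A)$ is not strongly connected. Then there exist nodes $j,i \in \mathcal{V}$ for which no directed path from $j$ to $i$ exists in $\mathcal{G}(A)$. The key bridging fact I would establish (by induction on $k$, using Lemma~\ref{Consensus:Lemma:MaxConsensus:ProductofAdjecencyMatrices} iteratively) is the combinatorial interpretation $(A^k)_{i,j} = 0$ if and only if there is a directed path of length at most $k$ from $j$ to $i$ in $\mathcal{G}(A)$. The base case $k=1$ is just the definition of the dependency graph, and the inductive step uses the ``$\exists l$'' clause of Lemma~\ref{Consensus:Claim:MaxConsensus:ClaimInProductofAdjecencyMatrices} to extend a length-$(k{-}1)$ path by one edge, together with the superadditivity of the edge sets under tropical multiplication. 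Since no path from $j$ to $i$ exists in $\mathcal{G}(A)$, this gives $(A^k)_{i,j} = -\infty$ for every $k$, so $A^k \neq \mathbf{0}$ for all $k$, and the protocol fails to converge whenever $x_j(0)$ is the strict maximum of the initial values.

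The main obstacle, such as it is, lies in stating and proving the path-characterization of the entries of $A^k$ cleanly; the lemmas already developed give both containments (superadditivity yields that paths force zero entries, while closure in $\{0,-\infty\}$ prevents spurious zeros from appearing at entries $(i,j)$ with no $j$-to-$i$ walk of length $k$). Once that interpretation is in place, both directions collapse to standard reachability statements about directed graphs, and the theorem follows.
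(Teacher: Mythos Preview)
Your proposal is correct, and the \emph{if} direction is identical to the paper's. For the \emph{only if} direction you take a genuinely different route. The paper argues via reducibility: if $\mathcal{G}(A)$ is not strongly connected then, after a simultaneous permutation of rows and columns, $A$ has a block lower-triangular form with a $[-\infty]$ off-diagonal block, and this form is preserved under tropical powers, so $A^k$ can never equal $\mathbf{0}$. Your argument instead establishes directly (by induction through Lemma~\ref{Consensus:Lemma:MaxConsensus:ProductofAdjecencyMatrices} and Lemma~\ref{Consensus:Claim:MaxConsensus:ClaimInProductofAdjecencyMatrices}) the walk interpretation $(A^k)_{i,j}=0$ iff there is a directed walk of length at most $k$ from $j$ to $i$, and then observes that a pair $(j,i)$ with no such walk gives a persistent $-\infty$ entry. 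Your approach is more self-contained, staying entirely within the lemmas already proved and avoiding the external citations on reducible matrices; the paper's approach is a bit more structural and yields the slightly stronger conclusion that an entire block of entries stays $-\infty$, not just a single one. Either way the reduction to Theorem~\ref{Consensus:Theorem:MaxConsensus:ConvergenceTheorem:AdjecencyMatrix} is the same, and both arguments close the gap cleanly.
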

\begin{proof}
{{\em if:} \\
If $\mathcal{G}(A)$ is strongly connected, from
Lemma~\ref{Consensus:Lemma:MaxConsensus:AdjecencyMarixNilPotent}, we
have that $A^d=\mathbf{0}$ and thus using
Theorem~\ref{Consensus:Theorem:MaxConsensus:ConvergenceTheorem:AdjecencyMatrix}
it follows that a consensus is achieved in $d$ time steps. \\
{\em only if:}\\
We use the fact that if $\mathcal{G}(A)$ is not strongly connected,
then $A$ is reducible \citep{fiedler,ryser}. That is, there exist a
permutation $\mathbf{\sigma}$ of the numbering of nodes in
$\mathcal{V}$ such that the resultant adjacency matrix
$\mathfrak{S}A\mathfrak{S}^T$ is of the form \[ \hat{A} = \left[
\begin{array}{cc} A_{11} & [-\mathbf{\infty}]\\ A_{12} & A_{22} \end{array} \right]\]
where $\mathfrak{S}$ is the permutation matrix \citep{ryser}
corresponding to $\sigma$, $A_{1}$ and $A_2$ are square matrices
that are non-vacuous (dimension greater than or equal to $1$), and
$[-\mathbf{\infty}]$ is a matrix of appropriate dimension with all
entries as $-\infty$.
\begin{claim}
{\label{Consensus:Claim:MaxConsensus:NonExistenceofNilPotent} There
exist no $k\in\mathbb{N}$ such that, for a matrix $\hat{A}$ of above
form, $\hat{A}^k=\mathbf{0}$}
\end{claim}
\begin{proof}
{This follows from the tropical matrix multiplication of irreducible
matrices \citep{ryser}. In fact, such matrices retains above form
after multiplication.}
\end{proof}
Thus if $\mathcal{G}(A)$ is not strongly connected, then there does
not exist a $k\in\mathbb{N}$ such that $A^k=\mathbf{0}$ and thus a
max-consensus is not achieved.}
\end{proof}
\section{Switching Topology}
\label{Consensus:SubSection:MaxConsensus:SwitchingTopology} Now, we
look at the case of switching topology where the underlying graph
changes during each time step. We consider a consensus protocol of
the form
\begin{equation}
\mathbf{x}(k+1) = A_k \mathbf{x}(k)
\label{Consensus:Equation:MaxConsensus:SwitchingTopology}
\end{equation}
where $A_k \in\mathbb{T}^{N\times N}$ is a tropical adjacency matrix
for $k\in\mathbb{Z}_{\ge 0}$. Equivalently, we have
\[ \mathbf{x}(k+1) = A_kA_{k-1}\cdots A_0 \mathbf{x}(0) \]
\begin{theorem}
{\label{Consensus:Theorem:MaxConsensus:SwitchingTopologyConvergenceTheorem:MatrixProducts}
The consensus protocol in
(\ref{Consensus:Equation:MaxConsensus:SwitchingTopology}) converges
for all initial conditions if and only if there exist a
$k\in\mathbb{N}$ such that $A_kA_{k-1}\cdots A_0 = \mathbf{0}$.}
\end{theorem}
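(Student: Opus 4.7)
The plan is to mirror the fixed-topology proof of Theorem~\ref{Consensus:Theorem:MaxConsensus:ConvergenceTheorem:AdjecencyMatrix}, with the product $P_k := A_k \otimes A_{k-1} \otimes \cdots \otimes A_0$ playing the role that $A^{k+1}$ played there. The key structural ingredient I will reuse is Lemma~\ref{Consensus:Lemma:MaxConsensus:ProductofAdjecencyMatrices} (equivalently Proposition~\ref{Consensus:Proposition:MaxConsensus:ZeroEntryInvariance}): tropical adjacency matrix multiplication is superadditive on the set of $0$-entries, so the set of $-\infty$-entries of $P_k$ can only shrink as $k$ grows.

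For the \emph{if} direction, suppose $P_k=\mathbf{0}$ for some $k$. Then $\mathbf{x}(k+1)=P_k\otimes \mathbf{x}(0)$ has $i$th coordinate $\max_{l\in\mathcal{V}}\{0+x_l(0)\}=\max_l x_l(0)$, so max-consensus is attained at time $k+1$. I would then verify that consensus persists at all later times: if $\mathbf{x}(m)=c\,\mathbf{1}$ for a scalar $c$ and $A_m$ is any tropical adjacency matrix, then $(A_m\otimes c\mathbf{1})_i=\max_l\{(A_m)_{il}+c\}$, and the diagonal entry $(A_m)_{ii}=0$ forces this maximum to equal $c$. Hence the asserted convergence.

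For the \emph{only if} direction, assume $P_k\neq\mathbf{0}$ for every $k$. Let $Z_k:=\{(i,j):(P_k)_{ij}=-\infty\}$. Since $P_{k+1}=A_{k+1}\otimes P_k$, Lemma~\ref{Consensus:Lemma:MaxConsensus:ProductofAdjecencyMatrices} yields $Z_{k+1}\subseteq Z_k$. A nested, always nonempty sequence of subsets of the finite set $\mathcal{V}\times\mathcal{V}$ must stabilize, so there exist $K$ and a nonempty $Z_\infty$ with $Z_k=Z_\infty$ for all $k\ge K$. Pick any $(i,j)\in Z_\infty$ and choose the initial condition $x_j(0)=1$, $x_l(0)=0$ for $l\neq j$. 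Then for every $k\ge K$,
\begin{displaymath}
x_i(k+1)=\max_{l\in\mathcal{V}}\{(P_k)_{il}+x_l(0)\}=0\neq 1=\max_{l\in\mathcal{V}} x_l(0),
\end{displaymath}
because the only nonzero initial value sits at $l=j$ where $(P_k)_{ij}=-\infty$, and every other summand is at most $0$. Thus consensus fails for this initial condition, contradicting convergence for all initial conditions.

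The main obstacle is this only-if step. Reading a $-\infty$ position off each $P_k$ separately is not enough, because different $k$ could place that $-\infty$ in different coordinates and no single entry of $\mathbf{x}(k)$ would remain permanently stuck. Monotonicity of $Z_k$, inherited from the superadditivity in Lemma~\ref{Consensus:Lemma:MaxConsensus:ProductofAdjecencyMatrices}, is precisely what upgrades the pointwise hypothesis ``$P_k\neq\mathbf{0}$ for all $k$'' to the uniform statement needed to exhibit one fixed failing coordinate.
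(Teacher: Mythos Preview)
Your proposal is correct and follows essentially the same approach as the paper, whose proof is the single sentence ``similar to that of Theorem~\ref{Consensus:Theorem:MaxConsensus:ConvergenceTheorem:AdjecencyMatrix} where $A^k$ is replaced by $A_kA_{k-1}\cdots A_0$.'' Your explicit use of the monotonicity $Z_{k+1}\subseteq Z_k$ (from Lemma~\ref{Consensus:Lemma:MaxConsensus:ProductofAdjecencyMatrices}) to locate a single persistent $-\infty$ position is precisely the detail that this one-line reduction leaves implicit, so your write-up is in fact more careful than the paper's.
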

\begin{proof}
{Proof is similar to that of
Theorem~\ref{Consensus:Theorem:MaxConsensus:ConvergenceTheorem:AdjecencyMatrix}
where $A^k$ is replaced by $A_kA_{k-1}\cdots A_0$.}
\end{proof}
If $A_k$ is drawn from a finite set $\{A_1,A_2,\ldots,A_m\}$ of
tropical adjacency matrices, an interesting question to ask is
\begin{quote}
Does there exist a finite sequence $i_1,i_2,\ldots,i_n$ with $1\leq
i_r\leq m$ and $r\in\{1,\ldots,n\}$ such that the system in
Eq.~(\ref{Consensus:Equation:MaxConsensus:SwitchingTopology})
converges in $n$ time steps?
\end{quote}
Before answering that we give the following useful definition.
\begin{definition}
{\label{Consensus:Definition:MaxConsensus:JointlyStronglyConnected}
The graphs
$\mathcal{G}_1(\mathcal{V},\mathcal{E}_1),\ldots,\mathcal{G}_m(\mathcal{V},\mathcal{E}_m)$
with adjacency matrices $A_1,\ldots,A_m$ are called jointly
strongly connected if the union graph
$\mathcal{G}(A_1\oplus\cdots\oplus A_m)=\bigcup_{r=1}^m
\mathcal{G}_r$, with vertex and edge set as
$(\mathcal{V},\bigcup_{r=1}^m\mathcal{E}_r)$, is strongly
connected.}
\end{definition}
\begin{proposition}
{\label{Consensus:Proposition:MaxConsensus:StronglyImpliesJointly}
Let $A,B\in\mathbb{T}^{N\times N}$ be tropical adjacency matrices.
Then the graph $\mathcal{G}(A\otimes B)$ is strongly connected if
and only if $\mathcal{G}(A)$ and $\mathcal{G}(B)$ are jointly
strongly connected.}
\end{proposition}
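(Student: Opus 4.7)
The plan is to prove both directions by relating directed paths in $\mathcal{G}(A\otimes B)$ to directed paths in the union graph $\mathcal{G}(A)\cup\mathcal{G}(B)=\mathcal{G}(A\oplus B)$. The key observation, which is already packaged in Lemma~\ref{Consensus:Lemma:MaxConsensus:ProductofAdjecencyMatrices} and Lemma~\ref{Consensus:Claim:MaxConsensus:ClaimInProductofAdjecencyMatrices}, is a ``two-step lifting'' of edges: on the one hand $\mathcal{E}(A)\cup\mathcal{E}(B)\subseteq\mathcal{E}(A\otimes B)$, and on the other hand, if $(j,i)\in\mathcal{E}(A\otimes B)$ then by the tropical formula $c_{ij}=\max_{l}(a_{il}+b_{lj})=0$ there must exist some $l$ with $(l,i)\in\mathcal{E}(A)$ and $(j,l)\in\mathcal{E}(B)$, so $(j,i)$ corresponds to a directed walk $j\to l\to i$ of length at most two in the union graph (the cases $l=i$ and $l=j$, available thanks to the self-loops, collapse this to a single edge of $B$ or $A$ respectively).

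For the \emph{if} direction I would assume that $\mathcal{G}(A)$ and $\mathcal{G}(B)$ are jointly strongly connected, i.e.\ that $\mathcal{G}(A)\cup\mathcal{G}(B)$ is strongly connected. Since every edge of the union graph is also an edge of $\mathcal{G}(A\otimes B)$, any directed path between two vertices in the union lifts verbatim to a directed path in $\mathcal{G}(A\otimes B)$, so $\mathcal{G}(A\otimes B)$ is strongly connected. This direction is really just the superadditive inclusion $\mathcal{E}(A\otimes B)\supseteq\mathcal{E}(A)\cup\mathcal{E}(B)$.

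For the \emph{only if} direction I would argue by the contrapositive: suppose the union graph is not strongly connected, so there exist vertices $u,v$ with no directed $u\to v$ path in $\mathcal{G}(A)\cup\mathcal{G}(B)$. Using the lifting observation above, any single edge $(j,i)\in\mathcal{E}(A\otimes B)$ expands to a directed walk of length at most two from $j$ to $i$ in the union, so concatenating these expansions along any directed path of length $k$ in $\mathcal{G}(A\otimes B)$ produces a directed walk of length at most $2k$ in $\mathcal{G}(A)\cup\mathcal{G}(B)$ with the same endpoints. Hence the absence of a $u\to v$ path in the union forces the absence of one in $\mathcal{G}(A\otimes B)$, contradicting strong connectivity of the latter. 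The proof is driven entirely by Lemma~\ref{Consensus:Lemma:MaxConsensus:ProductofAdjecencyMatrices}, so there is no serious obstacle; the only point that needs care is making the edge-lifting precise and invoking the self-loop hypothesis to absorb the length-one cases into the length-two template.
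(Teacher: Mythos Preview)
Your proposal is correct and follows essentially the same route as the paper: the \emph{if} direction is exactly the superadditive inclusion $\mathcal{E}(A\otimes B)\supseteq\mathcal{E}(A)\cup\mathcal{E}(B)$ from Lemma~\ref{Consensus:Lemma:MaxConsensus:ProductofAdjecencyMatrices}, and the \emph{only if} direction uses the converse of Lemma~\ref{Consensus:Claim:MaxConsensus:ClaimInProductofAdjecencyMatrices} (that the three listed cases are the \emph{only} ways $c_{ij}=0$) together with the observation that the extra edges produced by case~3 do not enlarge the reachability relation of the union graph. Your contrapositive phrasing via the length-$2$ edge lifting is a more explicit rendering of what the paper compresses into ``condition~3 is immaterial because reachability is what matters.''
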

\begin{proof}
{\noindent {\em if:}\\Follows from the superadditivity property of
the tropical adjacency matrix multiplication proved in Lemma~\ref{Consensus:Lemma:MaxConsensus:ProductofAdjecencyMatrices}. \\
{\em only if:}\\ Follows from
Claim~\ref{Consensus:Claim:MaxConsensus:ClaimInProductofAdjecencyMatrices}
in the proof of
Lemma~\ref{Consensus:Lemma:MaxConsensus:ProductofAdjecencyMatrices}.
The claim is actually `if and only if' although the `only if' part
is not required for
Lemma~\ref{Consensus:Lemma:MaxConsensus:ProductofAdjecencyMatrices}.
In fact, the three ways in which $c_{ij}$ can become zero (refer to
Claim~\ref{Consensus:Claim:MaxConsensus:ClaimInProductofAdjecencyMatrices})
are the only ways in which it will be zero. As far as strong
connectivity is concerned, condition 3 in
Claim~\ref{Consensus:Claim:MaxConsensus:ClaimInProductofAdjecencyMatrices}
is immaterial because reachability of a node is only what matters
for strong connectivity in which case the third condition is just
redundant. Thus, the strong connectivity of $\mathcal{G}(A\otimes
B)$ is same as that of $\mathcal{G}(A)\bigcup\mathcal{G}(B)$.}
\end{proof}
Thus, if the graphs corresponding to the tropical adjacency matrices
$\{A_1,\ldots,A_m\}$ are jointly strongly connected, then
$(A_1\otimes\cdots\otimes A_m)^n = \mathbf{0}$ for some
$n\in\mathbb{N}$. Following theorem is the answer to the question of
convergence of max-consensus protocol under switching topology
(Eq.~\eqref{Consensus:Equation:MaxConsensus:SwitchingTopology}).
\begin{theorem}
{\label{Consensus:Theorem:MaxConsensus:SwitchingTopologyNecessary&SufficientCondition}
Given a finite set of tropical adjacency matrices
$\{A_1,\ldots,A_m\}\subset\mathbb{T}^{N\times N}$, there exists a
finite sequence $A_{i_1},\ldots,A_{i_n}$ with $1\leq i_r\leq m$ for
all $r=\{1,\ldots,n\}$ such that the system in
Eq.~(\ref{Consensus:Equation:MaxConsensus:SwitchingTopology}) with
$A_k\in\{A_1,\ldots,A_m\}$ attains consensus for all initial
conditions if and only if $\mathcal{G}(A_1),\ldots,\mathcal{G}(A_m)$
are jointly strongly connected.}
\end{theorem}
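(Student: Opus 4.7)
The plan is to prove both directions by combining Proposition~\ref{Consensus:Proposition:MaxConsensus:StronglyImpliesJointly} (extended from two matrices to $m$ matrices) with Lemma~\ref{Consensus:Lemma:MaxConsensus:AdjecencyMarixNilPotent} and Theorem~\ref{Consensus:Theorem:MaxConsensus:SwitchingTopologyConvergenceTheorem:MatrixProducts}, using Lemma~\ref{Consensus:Lemma:MaxConsensus:ProductofAdjecencyMatrices} as the workhorse for edge-set inclusions.

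For the \emph{if} direction, I would fix an enumeration and form the single product $B = A_1 \otimes A_2 \otimes \cdots \otimes A_m$. By iterated application of the superadditivity clause of Lemma~\ref{Consensus:Lemma:MaxConsensus:ProductofAdjecencyMatrices}, $\mathcal{E}(B) \supseteq \bigcup_{r=1}^{m} \mathcal{E}(A_r)$. Since the right-hand side is by hypothesis the edge set of a strongly connected graph and adding edges preserves strong connectivity, $\mathcal{G}(B)$ is strongly connected. Let $d$ be its diameter; Lemma~\ref{Consensus:Lemma:MaxConsensus:AdjecencyMarixNilPotent} then yields $B^d = \mathbf{0}$. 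The witnessing sequence is the block $A_1, A_2, \ldots, A_m$ concatenated with itself $d$ times, and Theorem~\ref{Consensus:Theorem:MaxConsensus:SwitchingTopologyConvergenceTheorem:MatrixProducts} supplies convergence.

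For the \emph{only if} direction, suppose a sequence $A_{i_1}, \ldots, A_{i_n}$ drives the protocol to consensus. By Theorem~\ref{Consensus:Theorem:MaxConsensus:SwitchingTopologyConvergenceTheorem:MatrixProducts} the product $P = A_{i_n} \otimes \cdots \otimes A_{i_1}$ equals $\mathbf{0}$, whose dependency graph is the completely connected graph and hence trivially strongly connected. I would then apply the $m$-matrix generalization of Proposition~\ref{Consensus:Proposition:MaxConsensus:StronglyImpliesJointly} to conclude that $\mathcal{G}(A_{i_1}), \ldots, \mathcal{G}(A_{i_n})$ are jointly strongly connected, i.e.\ $\bigcup_{r=1}^{n} \mathcal{E}(A_{i_r})$ spans a strongly connected graph on $\mathcal{V}$. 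Since $\{i_1, \ldots, i_n\} \subseteq \{1, \ldots, m\}$, the enlarged union $\bigcup_{r=1}^{m}\mathcal{E}(A_r)$ only adds edges, so it too is strongly connected, which is exactly joint strong connectivity of the full family.

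The main obstacle is the $m$-matrix extension of Proposition~\ref{Consensus:Proposition:MaxConsensus:StronglyImpliesJointly} invoked in the \emph{only if} step. A naive induction is not transparent because $\mathcal{G}(A_2 \otimes \cdots \otimes A_m)$ generally carries strictly more edges than $\bigcup_{r=2}^{m} \mathcal{G}(A_r)$, so one cannot mechanically reapply the $m=2$ case. The observation that unlocks the induction, already implicit in the proof of that proposition, comes from clause~3 of Lemma~\ref{Consensus:Claim:MaxConsensus:ClaimInProductofAdjecencyMatrices}: every edge $(j,i) \in \mathcal{E}(A \otimes B)$ that is not already in $\mathcal{E}(A) \cup \mathcal{E}(B)$ is realized by a two-hop path in that union. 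Hence $\mathcal{G}(A \otimes B)$ and $\mathcal{G}(A) \cup \mathcal{G}(B)$ induce the same reachability relation, which is all that is relevant for strong connectivity. Iterating this equivalence shows that strong connectivity of $\mathcal{G}(A_1 \otimes \cdots \otimes A_m)$ is the same as strong connectivity of $\bigcup_{r=1}^{m} \mathcal{G}(A_r)$, exactly what the only-if direction requires.
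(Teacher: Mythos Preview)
Your proposal is correct and follows exactly the strategy the paper sketches: extend Proposition~\ref{Consensus:Proposition:MaxConsensus:StronglyImpliesJointly} from two factors to $m$ factors, then argue as in Theorem~\ref{Consensus:Theorem:MaxConsensus:ConvergenceTheorem:GraphConnectivity} via Lemma~\ref{Consensus:Lemma:MaxConsensus:AdjecencyMarixNilPotent} and Theorem~\ref{Consensus:Theorem:MaxConsensus:SwitchingTopologyConvergenceTheorem:MatrixProducts}. Your explicit discussion of why the $m$-matrix extension goes through (edges of the tropical product that are not in the union correspond to multi-hop paths in the union, so reachability is unchanged) supplies the detail the paper leaves implicit.
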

\begin{proof}
{Follows by extending the result of
Proposition~\ref{Consensus:Proposition:MaxConsensus:StronglyImpliesJointly}
to finite case and proceeding in the similar lines of proof of
Theorem~\ref{Consensus:Theorem:MaxConsensus:ConvergenceTheorem:GraphConnectivity}.}
\end{proof}
The above theorem has a couple of interesting corollaries. We give
the following definition towards this.
\begin{definition}
{\label{Consensus:Definition:MaxConsensus:MatrixMortalityProblem}
Given a finite set of matrices $\{A_1,\ldots,A_m\}$, the matrix
mortality problem asks the following question: Does there exist a
finite sequence $i_1,\ldots,i_n$ with $1\leq i_r\leq m$ and
$r\in\{1,\ldots,n\}$ such that $A_{i_1},\ldots,A_{i_r}=\mathbf{0}$?}
\end{definition}
The matrix mortality problem is of great interest to the theoretical
computer science community \citep{sipser}. The matrix mortality
problem is usually undecidable, that is, there does not exist an
algorithm which can find such a sequence given a matrix set as
input.
\begin{corollary}
{\label{Consensus:Corollary:MaxConsensus:DecidabilityofMatrixMortality}
If the finite set of input matrices $\{A_1,\ldots,A_m\}$ are
tropical adjacency matrices, then the matrix mortality problem is
decidable. Moreover, this can be done in polynomial time.}
\end{corollary}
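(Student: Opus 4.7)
The plan is to reduce the matrix mortality problem, restricted to tropical adjacency matrices, directly to a strong connectivity test on a single auxiliary digraph, and then to invoke a standard polynomial-time strong-connectivity algorithm.

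First I would apply Theorem~\ref{Consensus:Theorem:MaxConsensus:SwitchingTopologyNecessary&SufficientCondition}, which already characterizes the existence of a nullifying sequence: there is a finite sequence $A_{i_1}, \ldots, A_{i_n}$ with $A_{i_1} \otimes \cdots \otimes A_{i_n} = \mathbf{0}$ (which is exactly the mortality condition, since $\mathbf{0}$ is the matrix that drives the consensus system to consensus and corresponds under our encoding to the ``dead'' matrix in the mortality sense) if and only if the graphs $\mathcal{G}(A_1), \ldots, \mathcal{G}(A_m)$ are jointly strongly connected. Thus the decision problem collapses to: is the union graph $\mathcal{G}(A_1 \oplus \cdots \oplus A_m)$, on vertex set $\{1, \ldots, N\}$ with edge set $\bigcup_{r=1}^m \mathcal{E}(A_r)$, strongly connected?

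The algorithm is then immediate. Form the tropical sum $B = A_1 \oplus \cdots \oplus A_m$ entrywise; this takes $O(mN^2)$ time since each of the $N^2$ entries of $B$ is the maximum of $m$ values drawn from $\{0,-\infty\}$. Build the digraph $\mathcal{G}(B)$, which has at most $N^2$ edges, and run any standard linear-time strong connectivity procedure (Tarjan's or Kosaraju's algorithm) on it in $O(N + N^2) = O(N^2)$ time. Return \emph{yes} if $\mathcal{G}(B)$ is strongly connected and \emph{no} otherwise. The total running time is $O(mN^2)$, which is polynomial in the input size.

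There is essentially no obstacle here beyond observing that the previous theorem already does all the work: the hard direction, namely that joint strong connectivity implies the existence of a nullifying product, was established via Proposition~\ref{Consensus:Proposition:MaxConsensus:StronglyImpliesJointly} together with Lemma~\ref{Consensus:Lemma:MaxConsensus:AdjecencyMarixNilPotent}. The only thing to verify carefully is that the reduction is faithful, i.e., that the $\mathbf{0}$ produced by Theorem~\ref{Consensus:Theorem:MaxConsensus:SwitchingTopologyNecessary&SufficientCondition} coincides with the zero matrix of the mortality problem under the tropical encoding, and that the strong-connectivity test is genuinely polynomial in the bit-length of the input — both of which are routine.
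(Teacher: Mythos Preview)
Your proposal is correct and follows essentially the same route as the paper: invoke Theorem~\ref{Consensus:Theorem:MaxConsensus:SwitchingTopologyNecessary&SufficientCondition} to reduce mortality to checking joint strong connectivity of the union graph, then observe this is a polynomial-time test. The paper's justification is terser (it does not spell out the $O(mN^2)$ algorithm or name Tarjan/Kosaraju), so your version is in fact more complete than the original.
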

From
Theorem~\ref{Consensus:Theorem:MaxConsensus:SwitchingTopologyNecessary&SufficientCondition},
we know that the matrix mortality problem has a positive answer if
and only if $A_1,\ldots,A_m$ are jointly strongly connected. In that
case, the zero matrix is obtained as $(A_1\otimes\cdots\otimes
A_m)^n$ for some finite $n\in\mathbb{N}$. Another interesting
corollary is as follows.
\begin{corollary}
{\label{Consensus:Corollary:MaxConsensus:SemigroupMonoid} The
semigroup (under binary operation $\otimes$ which is associative)
generated by the tropical adjacency matrices $\{A_1,$ $\ldots,$
$A_m\}\subset\mathbb{T}^{N\times N}$ contains $\mathbf{0}$ if and
only if $\bigcup_{r=1}^m \mathcal{G}(A_r)$ is strongly connected.}
\end{corollary}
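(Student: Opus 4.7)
The plan is to derive the corollary as an essentially immediate bookkeeping consequence of Theorem~\ref{Consensus:Theorem:MaxConsensus:SwitchingTopologyNecessary&SufficientCondition}. By definition the semigroup generated by $\{A_1,\ldots,A_m\}$ under $\otimes$ consists of all finite products $A_{i_1}\otimes\cdots\otimes A_{i_n}$ with $n\geq 1$ and $i_r\in\{1,\ldots,m\}$, so the statement ``$\mathbf{0}$ belongs to the semigroup'' unpacks, by definition, to ``there exist indices $i_1,\ldots,i_n$ such that $A_{i_1}\otimes\cdots\otimes A_{i_n}=\mathbf{0}$.'' This is precisely the existential condition appearing, via Theorem~\ref{Consensus:Theorem:MaxConsensus:SwitchingTopologyConvergenceTheorem:MatrixProducts}, in the characterisation of convergence for the switched system.

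For the ``if'' direction I would assume that $\bigcup_{r=1}^{m}\mathcal{G}(A_r)$ is strongly connected, i.e., that $\mathcal{G}(A_1),\ldots,\mathcal{G}(A_m)$ are jointly strongly connected in the sense of Definition~\ref{Consensus:Definition:MaxConsensus:JointlyStronglyConnected}, and invoke Theorem~\ref{Consensus:Theorem:MaxConsensus:SwitchingTopologyNecessary&SufficientCondition} to produce a finite sequence of indices whose associated product equals $\mathbf{0}$; this product is an element of the semigroup by construction. Conversely, if $\mathbf{0}$ lies in the semigroup, then some finite product of generators equals $\mathbf{0}$, which by Theorem~\ref{Consensus:Theorem:MaxConsensus:SwitchingTopologyConvergenceTheorem:MatrixProducts} witnesses consensus for the switching system driven by that sequence, and Theorem~\ref{Consensus:Theorem:MaxConsensus:SwitchingTopologyNecessary&SufficientCondition} then forces $\mathcal{G}(A_1),\ldots,\mathcal{G}(A_m)$ to be jointly strongly connected, i.e., $\bigcup_{r=1}^{m}\mathcal{G}(A_r)$ to be strongly connected.

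There is no genuine technical obstacle here: the content is entirely a translation between three equivalent assertions already established in the paper, namely ``some product of generators equals $\mathbf{0}$'', ``the associated switching system attains consensus for all initial conditions'', and ``joint strong connectivity of $\mathcal{G}(A_1),\ldots,\mathcal{G}(A_m)$''. The only point needing attention is identifying, cleanly, the semigroup-theoretic formulation with the existential quantifier over finite index sequences that appears in Theorem~\ref{Consensus:Theorem:MaxConsensus:SwitchingTopologyNecessary&SufficientCondition}; once this identification is made the corollary follows in both directions without further work.
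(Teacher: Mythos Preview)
Your proposal is correct and matches the paper's intent: the paper states this corollary without proof, treating it as an immediate consequence of Theorem~\ref{Consensus:Theorem:MaxConsensus:SwitchingTopologyNecessary&SufficientCondition}, and your argument is precisely the unpacking of that implication via the identification of ``$\mathbf{0}$ lies in the semigroup'' with ``some finite product of generators equals $\mathbf{0}$''.
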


The following remarks are in place.

A much more stronger result on the convergence of the max-consensus
protocol can be achieved if the condition in our theorems `achieves
consensus for all initial condition' is relaxed. In particular, a
system $\mathbf{x}(k+1) = A_k \mathbf{x}(0)$ achieves consensus for
a particular input $\mathbf{x}(0)$ at time step $k+1$ if and only if
there exists a vector $\mathbf{y} \in \mathbb{T}^N$ such that
$A_{k}\mathbf{y} = \mathbf{0}$, where
\[ y_{i} = \left\{\begin{array}{ll}
0 & \text{if } x_i(0) = \max_j x_j(0) \\
-\infty & \text{otherwise}
\end{array} \right. \]
In other words, if node $i$ has a maximum initial value, the
necessary and sufficient condition for a max-consensus to occur at
the $(k+1)^{\rm th}$ time step is that all elements in the $i^{\rm
th}$ column of $A^k$ are zeros or equivalently, the graph
$\mathcal{G}(A^k)$ has a directed spanning tree rooted at node $i$.

In the case of the max-consensus protocol, if a consensus occurs,
it will happen in a finite number of steps. This should be
contrasted with the asymptotic convergence of weighted average
consensus protocols \citep{olfati1,ren}. This property enables us to
give a stronger convergence criterion. In fact, we gave the
necessary and sufficient condition for convergence under switching
topology. However, this is difficult in general for the weighted
average consensus case as one has to consider infinite product of
matrices (as opposed to product of finite number of matrices in the
max-consensus case) while analyzing convergence. Thus, in case of
weighted average consensus, one is forced to impose stronger
requirements on the matrices to achieve convergence of the infinite
matrix products. This will, in general, result in obtaining only a
sufficiency condition for convergence.

A max-consensus, scheduled at regular intervals, over large
networks can be used to detect network faults. This is practical as
most of the naturally occurring networks like random networks
\citep{erdos}, small world networks \citep{watts}, and scale free
networks \citep{barabasi} usually have very small diameters
\citep{princeton}. Since the number of time steps for max-consensus
algorithm to converge is equal to the diameter of the underlying
network, consensus is achieved fast in these networks due to the
`small diameter property'. A failure to attain consensus in the
specified time is a indication to the presence of a network fault.

The matrix operations in the semiring
$(\mathbb{T},\max,+)$ is exactly same as the matrix operations in
the boolean semiring $(\{0,1\},\mathrm{OR},\mathrm{AND})$
\citep{hammer} as there is a ring-isomorphism between these two
semirings ($(\mathbb{T},\max,+)$ is isomorphic to
$(\{0,1\},\mathrm{OR},\mathrm{AND})$). Thus, although the
max-consensus problem cannot be posed using boolean semiring, all
the matrix multiplication properties used to prove convergence can
be identically obtained by working in boolean semiring (See
\citep{hammer} for properties of boolean adjacency matrix
multiplication).

\section{Concluding Remarks}
We analyzed the convergence of max consensus protocol with both
fixed and switching topologies. The observation that tropical
algebra gives a natural way to formulate this problem enabled an
analysis of the convergence in terms of tropical matrix products
which could be easily extended to switching topology case. We gave
the necessary and sufficient condition for max-consensus to occur in
fixed as well as switched topology networks.

{\footnotesize
\bibliographystyle{jtbnew}
\bibliography{joe,papi}
}
\end{document}